\documentclass[a4paper,11pt]{amsart}

\usepackage{enumerate, booktabs}
\usepackage{amsmath, amsfonts, amssymb, amsthm}
\usepackage[all]{xy}
\usepackage[paper = a4paper, left = 3.2cm, right = 3.2cm, headsep = 6mm, footskip = 10mm,
top = 32mm, bottom = 32mm, footnotesep=5mm, headheight = 2cm]{geometry}
\usepackage[active]{srcltx}
\usepackage{mathpazo}
\usepackage{hyperref}

\providecommand{\MR}{\relax\ifhmode\unskip\space\fi MR }

\providecommand{\href}[2]{#2}

\numberwithin{equation}{section}

\renewcommand{\geq}{\geqslant}

\theoremstyle{plain}
\newtheorem{thm}{Theorem}[section]

\newtheorem{lem}[thm]{Lemma}

\newtheorem*{thm*}{Theorem}

\theoremstyle{definition}

\theoremstyle{remark}
\newtheorem{rem}[thm]{Remark}
\newtheorem{ex}[thm]{Example}

\newcommand{\mbb}[1]{\mathbb{#1}}

\newcommand{\ol}[1]{\overline{#1}}
\newcommand{\lie}[1]{{\mathfrak{#1}}}
\newcommand{\abs}[1]{\lvert #1\rvert}
\newcommand{\Abs}[1]{\bigl\lvert #1\bigr\rvert}

\newcommand{\hq}{/\hspace{-0.12cm}/}

\newcommand{\acts}{\mbox{ \raisebox{0.26ex}{\tiny{$\bullet$}} }}

\DeclareMathOperator{\id}{id}

\DeclareMathOperator{\Hom}{Hom}

\DeclareMathOperator{\Lie}{Lie}

\DeclareMathOperator{\Cl}{Cl}
\DeclareMathOperator{\Pic}{Pic}

\title[Momentum maps and K\"ahler property of base spaces]{Momentum maps and 
the K\"ahler property for base spaces of reductive principal bundles}

\author{Daniel Greb}
\address{Essener Seminar f\"ur Algebraische Geometrie und Arithmetik, 
Fakult\"at f\"ur Ma\-the\-matik, Universit\"at Duisburg--Essen, 45117 Essen, 
Germany}
\email{daniel.greb@uni-due.de}
\urladdr{https://www.esaga.uni-due.de/daniel.greb}

\author{Christian Miebach}
\address{Univ.~Littoral C\^ote d'Opale, UR 2597 - LMPA - Laboratoire de 
math\'ematiques pures et appliqu\'ees Joseph Liouville, F-62100 Calais, 
France}
\email{christian.miebach@univ-littoral.fr}
\urladdr{http://www-lmpa.univ-littoral.fr/$\sim$miebach/}

\begin{document}

\begin{abstract}
We investigate the complex geometry of total spaces of reductive principal 
bundles over compact base spaces and establish a close relation between the 
K\"ahler property of the base, momentum maps for the action of a maximal compact 
subgroup on the total space, and the K\"ahler property of special equivariant 
compactifications. We provide many examples illustrating that the main result is 
optimal.
\end{abstract}

\maketitle

\section{Introduction}

Complex-reductive Lie groups $G = K^{\mathbb{C}}$ acting holomorphically on 
K\"ahlerian manifolds $X$ appear naturally in many questions of Complex 
Geometry. Classical examples, one of which found by Lescure we recall in 
Section~\ref{sect:examples} below, show that even if the action is proper and 
free with compact quotient manifold $B$, so that $X$ becomes a $G$-principal 
bundle over $B$, the K\"ahler property does in general not descend to $B$. 

In these examples, the induced action of the maximal compact subgroup $K$ of 
$G$ is not Hamiltonian with respect to any K\"ahler form on $X$, i.e., there is 
no equivariant momentum map $\mu: X \to \mathfrak{k}^* = \mathrm{Lie}(K)^*$. So, 
a natural guess might be that the K\"ahler property descends for Hamiltonian 
$G$-actions. We will see that it follows from the theory of K\"ahlerian 
Reduction, see~\cite{GS}, \cite{Sjamaar}, and~\cite{HL}, that this is true once 
in addition the zero fibre $\mu^{-1}(0)$ of the momentum map is compact, and 
conversely that principal $G$-bundles over compact K\"ahler manifolds always 
admit K\"ahler structures such that the $K$-action admits a momentum map with 
compact zero fibre. 

The group $G$ is an affine algebraic group in a unique and canonical way. As 
such, it admits a (non-unique) equivariant smooth projective completion; i.e., there 
exists a smooth projective variety $\overline{G}$ admitting an effective 
algebraic action of $G$ and containing a point $e\in \overline{G}$ that has 
trivial stabiliser in $G$ and open $G$-orbit $G \cong G \acts e \subset 
\overline{G}$. Any principal $G$-bundle $\pi\colon X \to B$ admits a fibrewise 
partial compactification to a $\overline{G}$-fibre bundle $\overline{\pi}\colon 
\overline{X} \to B$ having the same transition functions as $\pi$; using the 
associated bundle construction, this can be written as $\overline{X} = X 
\times_G \overline{G} \to X/G$. With this notation, our main result can now be 
formulated as follows: 

\begin{thm}\label{Thm:mainthm}
Let $G=K^\mbb{C}$ be a complex-reductive Lie group 
acting holomorphically, properly and freely on the connected complex manifold 
$X$  such that $Q=X/G$ is compact. Then, the following are equivalent:
\begin{enumerate}[(a)]
\item The quotient $Q$ is K\"ahler.
\item There exists a $K$-invariant K\"ahler form on $X$ with respect to which 
there is a momentum map $\mu\colon X\to \mathrm{Lie}(K)^*$ with compact zero fibre 
$\mu^{-1}(0)\not=\emptyset$.
\item The natural compactification of the principal $G$-bundle $\pi\colon X \to 
Q$ to a $\overline{G}$-fibre bundle $\overline{\pi}\colon \overline{X} \to Q$ is 
a K\"ahler manifold. 
\end{enumerate}
\end{thm}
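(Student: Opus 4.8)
The plan is to establish the cycle of implications (a)~$\Rightarrow$~(c)~$\Rightarrow$~(b)~$\Rightarrow$~(a). I would begin with (b)~$\Rightarrow$~(a), the most formal step: given a $K$-invariant Kähler form on $X$ with an equivariant momentum map $\mu$ whose zero fibre is compact and non-empty, freeness of the $G$-action makes $0$ a regular value (the $K$-action on $\mu^{-1}(0)$ is again free and proper), so by Kählerian reduction \cite{GS,Sjamaar,HL} the quotient $\mu^{-1}(0)/K$ is a compact complex manifold carrying an induced Kähler form. It then remains to identify $\mu^{-1}(0)/K$ with $Q$: the inclusion $\mu^{-1}(0)\hookrightarrow X$ descends to a holomorphic injection $\mu^{-1}(0)/K\to X/G=Q$ — injectivity uses the Kempf–Ness fact that a $G$-orbit meets $\mu^{-1}(0)$ in at most one $K$-orbit — whose image is open, while the source is compact and $Q$ is connected; hence the map is a biholomorphism and $Q$ is Kähler.

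For (a)~$\Rightarrow$~(c): $\overline{\pi}\colon\overline{X}=X\times_G\overline{G}\to Q$ is a holomorphic fibre bundle over the compact Kähler manifold $Q$ with fibre $\overline{G}$, a smooth projective \emph{rational} variety (connected linear algebraic groups are rational). In particular $H^1(\overline{G},\mathbb{R})=0$, and since the connected structure group $G$ acts trivially on the cohomology of the fibre, the local systems $R^q\overline{\pi}_*\mathbb{R}$ are trivial, so $R^1\overline{\pi}_*\mathbb{R}=0$. By a classical theorem of Blanchard — concretely, one spreads a fibrewise Kähler form out to a $d$-closed real $(1,1)$-form $\eta$ on $\overline{X}$ using the $\partial\bar\partial$-lemma on the compact Kähler fibre, the vanishing of $R^1\overline{\pi}_*\mathbb R$ removing the Leray obstruction — the class of a fibrewise Kähler form lifts to $H^2(\overline{X},\mathbb{R})$, and $\eta+C\,\overline{\pi}^*\omega_Q$ is then Kähler on $\overline{X}$ for $C$ large, by compactness of $Q$.

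For (c)~$\Rightarrow$~(b): averaging a Kähler form on the compact manifold $\overline{X}$ over $K$ yields a $K$-invariant Kähler form $\omega$, and the $K$-action on $(\overline{X},\omega)$ is Hamiltonian, because the obstruction $\xi\mapsto[\iota_{\xi_{\overline{X}}}\omega]\in H^1(\overline{X},\mathbb{R})$ is $\mathrm{Ad}(K)$-invariant, hence factors through $\mathfrak{z}(\mathfrak{k})$, where it vanishes: every holomorphic $\mathbb{C}^*$-action on a compact Kähler manifold has fixed points, so Frankel's theorem applies to the central circles, while the semisimple part is automatically Hamiltonian. This gives an equivariant momentum map $\mu_{\overline{X}}$ on $\overline{X}$, which we restrict to the $G$-invariant open set $X$. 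This alone does not suffice — the fibrewise zero fibre typically lies in the boundary orbits of $\overline{G}$ — so I would correct it using the non-compact geometry of the fibres: fix a maximal compact $K\subset G$, a faithful embedding $G\hookrightarrow\mathrm{SL}_N(\mathbb{C})$ with $K\subset\mathrm{SU}(N)$, and a smooth $K$-reduction $P\subset X$, so that $X\cong P\times_K G$. The Hilbert–Schmidt function $\psi_0(g)=\operatorname{tr}(g^*g)$ is a $K$-bi-invariant, strictly plurisubharmonic, proper exhaustion of $G$ with minimum set $K$, and $\widetilde\psi([p,g]):=\psi_0(g)$ defines a global $K$-invariant proper exhaustion of $X$ that is fibrewise strictly plurisubharmonic. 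For a suitably convex increasing $\chi$, the form $\omega_X:=\omega|_X+dd^c(\chi\circ\widetilde\psi)$ is again $K$-invariant Kähler, and its momentum map is $\mu_{\overline{X}}|_X+\mu_{\chi\circ\widetilde\psi}$, where the second summand is a map $X\to\mathfrak{k}^*$ that is proper over $Q$ and whose zero fibre is exactly $P$ (fibrewise it is the Hilbert–Schmidt momentum map $\mu_0$, with $\mu_0^{-1}(0)=K$). Hence $\mu_X^{-1}(0)$ lies in a sublevel set $\{\widetilde\psi\le R\}$, so it is compact; a degree argument near $P$ (together, if needed, with a choice of the central part of the normalisation of $\mu_{\overline X}$) shows it is non-empty.

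The main obstacle is precisely the positivity and properness bookkeeping in that last step. First, one must ensure that adding the fibrewise exhaustion term does not destroy positivity of $\omega_X$ over the \emph{non-compact} manifold $X$: $dd^c\widetilde\psi$ is only fibrewise positive, with indefinite horizontal and mixed contributions whose size grows towards the boundary divisor $\overline{X}\setminus X$, so one needs the convexification by $\chi$ (or, equivalently, a minimal-coupling construction with a connection on $P$) to absorb them. Second, one must keep the momentum map proper over the compact base, so that its zero fibre cannot escape to infinity along the fibres; the decisive quantitative input is that $\mathrm{SL}_N$ is determinant-constrained, so an eigenvalue of $g^*g$ tending to infinity is always detected by the pairing with $\mathfrak{p}=i\mathfrak{k}$, which makes $\mu_0$ proper with $\mu_0^{-1}(0)=K$. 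Compactness and connectedness of $Q$, together with Kempf–Ness uniqueness of $K$-orbits in zero fibres, are then the levers tying the reduced space back to $Q$ in the implications (b)~$\Rightarrow$~(a) and (c)~$\Rightarrow$~(b).
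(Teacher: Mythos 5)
Your implications ``(b) $\Rightarrow$ (a)'' and ``(a) $\Rightarrow$ (c)'' are in substance the paper's arguments: the reduction-theoretic identification of $\mu^{-1}(0)/K$ with a compact open subset of the connected $Q$ is exactly the paper's proof of the first, and for the second the paper embeds $\overline{X}$ into a projectivised vector bundle $\mathbb{P}(\mathcal{W})$ over $Q$ and invokes Blanchard/Voisin there, which amounts to the same use of Blanchard's criterion that you make directly (via $H^1(\overline{G},\mathbb{R})=0$ for the rational fibre). The genuine divergence is the third step: the paper closes the cycle with the one-line descent ``(c) $\Rightarrow$ (a)'' (a fibre bundle with compact K\"ahler total space and connected fibres has K\"ahler base, \cite[Prop.~II.2]{Bl}) and proves ``(a) $\Rightarrow$ (b)'' directly, whereas you attempt ``(c) $\Rightarrow$ (b)''. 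That step, as written, has a gap.

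The gap is the positivity of $\omega_X=\omega|_X+dd^c(\chi\circ\widetilde\psi)$. Your exhaustion $\widetilde\psi$ is built from a merely smooth $K$-reduction $P\subset X$ and the unbounded Hilbert--Schmidt function $\psi_0(g)=\operatorname{tr}(g^*g)$; the horizontal and mixed contributions to $dd^c(\chi\circ\widetilde\psi)$ are uncontrolled and grow along the non-compact fibres, and you propose to absorb them by a ``suitably convex increasing $\chi$''. This is precisely the failure mode isolated in Example~\ref{Ex:WrongBundleMetric}: for the norm-square function $\chi_h$ of a bundle metric on $\mathcal{O}(1)\to\mathbb{P}_1$, \emph{no} constant $c$ makes $i\partial\bar\partial\chi_h+c\cdot\pi^*\omega_{FS}$ positive, because the negative mixed terms grow at infinity in the fibres faster than the fixed horizontal positivity supplied by the base form. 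Composing with a convex increasing $\chi$ amplifies the first-order (hence mixed) terms along with the fibrewise Hessian, so it does not resolve this; the paper's fix goes in the \emph{opposite} direction, replacing the norm square by the bounded-growth potential $\log(\chi_h+1)$, whose $dd^c$ is a multiple of the relative Fubini--Study form and therefore extends to the compact bundle $\mathbb{P}(\mathcal{V}\oplus\mathbb{C})\to Q$, where positivity of the sum with $c\cdot\pi^*\omega_Q$ follows from compactness (Lemma~\ref{lem:VectorBundle}). Two smaller points: your construction presupposes that the $K$-action on $X$ extends to $\overline{X}$, which requires $\overline{G}$ to be equivariant for both left and right translations --- not part of hypothesis (c); and the non-emptiness of $\mu_X^{-1}(0)$ should not rest on an unspecified ``degree argument'' --- the paper obtains it because its momentum map equals $\iota_{\xi_{\mathcal V}}d^c\rho$ for a fibrewise exhaustive potential $\rho$, so the minimum of $\rho$ on each fibre is a zero of $\mu$ (alternatively, fibrewise properness of $\mu_X$ together with freeness yields a zero at the minimum of $\norm{\mu_X}^2$). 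The cleanest repair is to replace ``(c) $\Rightarrow$ (b)'' by the paper's route: (c) $\Rightarrow$ (a) by Blanchard descent, then (a) $\Rightarrow$ (b) via the equivariant embedding $X\hookrightarrow\mathcal{V}=X\times_G V$ and the potential $\log(\chi_h+1)$.
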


The implication "(a) $\Rightarrow$ (b)" can be seen as a first instance of a 
Hamiltonian version of Mumford's famous GIT-statement \cite[Converse 
1.12]{MumfordGIT}, which has been vastly generalised to show that any open 
subset of a smooth quasi-projective variety admitting a projective good quotient 
is actually the set of semistable points with respect to some linearised line 
bundle, see \cite{Hausen}. 

In Section~\ref{sect:examples}, we give several examples showing that 
Theorem~\ref{Thm:mainthm} is optimal. In particular, we construct an example of 
a proper, free action of a complex-reductive group $G=K^\mathbb{C}$ on a 
K\"ahler manifold $X$ such that the $K$-action is Hamiltonian (and in fact $X$ 
admits an equivariant K\"ahler compactification on which the $K$-action is still 
Hamiltonian) with compact, non-K\"ahler 
quotient $B$; in particular, the $K$-action is Hamiltonian with respect to 
\emph{some} K\"ahler form on $X$, but there is \emph{no} K\"ahler form on $X$ 
with $K$-momentum map having non-empty compact zero fibre. 

\subsection*{Acknowledgments}

We would like to thank Karl~Oeljeklaus for having brought 
Example~\ref{Ex:Lescure} to our attention. Both authors were partially 
supported by the ANR-DFG-funded project QuaSiDy - "Quantization, Singularities, 
and Holomorphic Dynamics".  DG is also partially supported by the DFG-Research 
Training Group GRK 2553 "Symmetries and classifying spaces: analytic, 
arithmetic and derived". He wants to thank the Laboratoire de Math\'ematiques 
Pures et Appliqu\'ees Joseph Liouville at Universit\'e du Littoral C\^ote 
d'Opale for its hospitality during a visit in May 2022.

\subsection*{Notation}

Throughout, we will work over the field $\mathbb{C}$ of complex numbers. A 
\emph{variety} is a reduced scheme of finite type over $\mathbb{C}$; i.e., not 
necessarily irreducible. All complex spaces are assumed to be Hausdorff and 
second countable, so that smooth complex spaces are complex manifolds. If $G$ 
acts holomorphically on a complex manifold $X$, for any $\xi \in \mathfrak{g} = 
\mathrm{Lie}(G)$ we will denote the induced (real holomorphic) \emph{fundamental 
vector field} on $X$ by $\xi_X$; i.e., for any $f \in \mathcal{C}^\infty(X)$, we 
have
\[\xi_X(f)(p) := \left.\frac{d}{dt}\right|_{t=0} \bigl(f(\exp(t\xi)\acts 
p) \bigr).\] 
If a Lie group $K$ with Lie algebra $\mathfrak{k}:= \mathrm{Lie}(K)$ acts on a 
K\"ahler manifold $(X, \omega)$ preserving the K\"ahler form, a \emph{momentum 
map} is a $K$-equivariant map $\mu: X \to \mathfrak{k}^*$ whose components 
$\mu^\xi (\cdot) = \mu(\cdot)(\xi) \in \mathcal{C}^\infty(X)$ satisfy the
Hamiltonian equations
\[d\mu^\xi = \imath_{\xi_X}\omega.\]
If the complex structure of $X$ is denoted by $J \in 
\mathrm{End}(TX)$, slightly nonstandard, but useful for our purposes, for any $f 
\in \mathcal{C}^\infty(X)$, we set $d^c(f) := df\circ J$, so that $2i \partial 
\bar{\partial} = -d d^c$.

\section{Momentum maps for $K$-representations and their projective 
compactifications}\label{Section:Representations} 

Let $V$ be a finite-dimensional complex $G$-representation. Assume that we are 
given a $K$-invariant Hermitian product $\langle \cdot, \cdot \rangle$ on $V$, 
making the induced $K$-representation unitary. Let $\chi_V: V \to 
\mathbb{R}^{\geq 0}$ be defined by $v \mapsto \langle v,v \rangle$ and consider 
the K\"ahler form $-dd^c\chi_V$ on $V$. Note that the good 
quotient $\pi_V: V \to V\hq G = \mathrm{Spec}\left(\mathbb{C}[V]^G \right)$ 
exists; it is endowed with a K\"ahler structure by K\"ahlerian reduction with 
respect to the momentum map \[\mu_V: V \to \mathfrak{k}^*, \; v \mapsto (\xi 
\mapsto 2\langle i\xi\cdot v, v\rangle) =  d^c \chi_V (\xi_V)(v). \] These 
however are not the right K\"ahler form and momentum map to consider in our 
situation, since they do not globalise to the situation of a $G$-vector bundle 
over a non-trivial base manifold with typical fibre $V$, 
see~Example~\ref{Ex:WrongBundleMetric}.

Instead, we are going to compactify the situation. For this, we consider the 
$G$-representation space $W:=V\oplus\mbb{C}$, where $G$ acts trivially on 
$\mbb{C}$, and embed $V$ regularly and equivariantly into $\mathbb{P}(W) 
=\mathbb{P}(V \oplus \mathbb{C})$; explicitly we consider \[\theta: V 
\hookrightarrow \mathbb{P}(V \oplus \mathbb{C}),\; v \mapsto [v:1].\] 
We denote by $\mathbb{P}(W)^{ss}$ the Zariski-open subset of 
points that are semistable with respect to the linearisation of the $G$-action 
on $\mathbb{P}(W)$ in $W$ or equivalently the corresponding 
linearisation in the line bundle $\mathcal{O}_{\mathbb{P}(W)}(1)$; the associated good quotient 
will be called $\pi_{\mathbb{P}(W)}\colon\mathbb{P}(W)^{ss} 
\to \mathbb{P}(W)^{ss}\hq G$. While not strictly needed for 
our subsequent arguments, the following observation regarding GIT is at least 
philosophically crucial.

\begin{lem}\label{lem:saturated}
The representation space $V$ is mapped isomorphically to a 
$\pi_{\mathbb{P}(W)}$-saturated subset of $\mathbb{P}(W)^{ss}$ 
via the $G$-equivariant map $\theta$.
\end{lem}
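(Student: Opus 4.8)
The statement has two parts: (1) $\theta$ maps $V$ isomorphically onto its image, and (2) the image $\theta(V)$ is $\pi_{\mathbb{P}(W)}$-saturated inside $\mathbb{P}(W)^{ss}$ — meaning it is a union of fibres of the good quotient map, equivalently $\theta(V) = \pi_{\mathbb{P}(W)}^{-1}(\pi_{\mathbb{P}(W)}(\theta(V)))$. Part (1) is immediate: $\theta$ is the standard affine chart $v \mapsto [v:1]$ of $\mathbb{P}(V\oplus\mathbb{C})$ corresponding to the last homogeneous coordinate being nonzero, so it is a locally closed embedding, and it is $G$-equivariant because $G$ acts trivially on the $\mathbb{C}$-summand. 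I would state this in one line and spend the rest on saturatedness.

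**The core of part (2)** is to identify $\theta(V)$ with the non-vanishing locus of a $G$-invariant section. Let $z$ denote the linear coordinate on the $\mathbb{C}$-summand of $W = V \oplus \mathbb{C}$; since $G$ acts trivially there, $z$ is a $G$-invariant linear form on $W$, hence a $G$-invariant global section of $\mathcal{O}_{\mathbb{P}(W)}(1)$ with respect to the chosen linearisation. Its zero locus is the hyperplane $\mathbb{P}(V) \subset \mathbb{P}(W)$, so $\{z \neq 0\} = \theta(V)$. First I would recall the standard GIT fact that for any $G$-invariant section $s \in H^0(\mathbb{P}(W), \mathcal{O}(1))^G$, the open set $(\mathbb{P}(W))_s := \{s \neq 0\}$ is contained in $\mathbb{P}(W)^{ss}$ and is $\pi_{\mathbb{P}(W)}$-saturated — indeed it is itself of the form $\pi_{\mathbb{P}(W)}^{-1}(D_+(s))$ where $D_+(s)$ is the corresponding affine open in the GIT quotient. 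Applying this with $s = z$ gives that $\theta(V) = (\mathbb{P}(W))_z$ is a saturated open subset of $\mathbb{P}(W)^{ss}$, which is exactly the claim. (As a sanity check one notes that every point of $\theta(V)$ is indeed semistable: the orbit closure in $W$ of a representative $(v,1)$ does not contain $0$ since the invariant function $z$ takes the value $1$ on it.)

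**Alternative/low-tech route**, in case one prefers not to invoke the affine-cone description: show directly that $\theta(V)$ is a union of $G$-orbit closures' worth of fibres. A point $p \in \mathbb{P}(W)^{ss}$ lies in the fibre $\pi_{\mathbb{P}(W)}^{-1}(\pi_{\mathbb{P}(W)}(p))$ together with all semistable points whose orbit closure meets $\overline{G \cdot p}$. If $p = [v:1] \in \theta(V)$ and $q \in \mathbb{P}(W)^{ss}$ has $\overline{G\cdot q} \cap \overline{G\cdot p} \neq \emptyset$ (in $\mathbb{P}(W)^{ss}$), then since the invariant rational function "value of the last coordinate, normalised" — concretely the invariant $z$ — is constant ($=$ nonzero) on $\overline{G\cdot p}$ and hence on the common point, $q$ cannot lie in $\mathbb{P}(V) = \{z=0\}$, so $q \in \theta(V)$. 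Thus $\theta(V)$ is saturated. The point of either argument is the same: the $G$-invariant coordinate $z$ separates $\theta(V)$ from its complement at the level of the quotient.

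**Main obstacle.** There is no serious obstacle; the only thing to be careful about is the bookkeeping between the two descriptions of the linearisation (in $W$ versus in $\mathcal{O}_{\mathbb{P}(W)}(1)$) and making sure that "$z$ is an invariant section" is phrased for the correct linearisation, so that the standard theorem on basic affine opens $(\mathbb{P}(W))_s$ applies verbatim; this is where I would be most careful to cite the precise form of Mumford's construction. Everything else is formal.
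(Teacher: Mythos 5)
Your proposal is correct and follows essentially the same route as the paper's proof: identify $\theta(V)$ with the non-vanishing locus $\mathbb{P}(W)_\sigma$ of the $G$-invariant section $\sigma$ of $\mathcal{O}_{\mathbb{P}(W)}(1)$ corresponding to the invariant linear coordinate on the trivial $\mathbb{C}$-summand, conclude semistability, and invoke the standard GIT fact (from the proof of Theorem~1.10 in Mumford's book) that such basic open sets are saturated for the quotient map. Your alternative low-tech argument via orbit closures and your care about the linearisation bookkeeping are sound additions, but the core argument coincides with the paper's.
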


\begin{proof}
This is almost tautological: if we choose linear coordinates (i.e., linear 
forms) $z_1, \dots, z_{\dim V}, z_0$ on $W=V \oplus \mathbb{C}$, then $z_0$ is 
$G$-invariant, as the action of $G$ on $\mathbb{C}$ is trivial. This implies 
that the associated section $\sigma = \sigma_{z_0} \in H^0\bigl(\mathbb{P}(W),  
\mathcal{O}_{\mathbb{P}(W)}(1)\bigr)$ is $G$-invariant. Since obviously 
\[\theta (V) = \{[w] \in \mathbb{P}(W) \mid \sigma (w) \neq 0\} 
= \mathbb{P}(W)_{\sigma},\] every point in $\theta(V)$ is 
semistable. Moreover, for any $\tau \in H^0\bigl(\mathbb{P}(W), 
\mathcal{O}_{\mathbb{P}(W)}(1)\bigr)^G$, the corresponding open subset  
$\mathbb{P}(W)_\tau$ is $\pi_{\mathbb{P}(W)}$-saturated; see the proof of 
\cite[Thm.~1.10]{MumfordGIT}
\end{proof}

Consequently, we will consider $V \subset \mathbb{P}(V \oplus \mathbb{C})$, 
suppressing $\theta$. Using the linear coordinate $z_0$ on $\mathbb{C}$ 
introduced in the previous proof, we endow the $G$-representation $W= V \oplus 
\mathbb{C}$ with the Hermitian form $\langle \cdot , \cdot \rangle$  
corresponding to \[\chi_W (v,z_0) := \chi_V(v) + |z_0|^2.\] This in turn yields 
a Fubini-Study form on $\mathbb{P}(W)$, which is $K$-invariant and makes the 
$K$-action Hamiltonian with momentum map given by 
\[\mu^\xi ([w]) = \frac{2 \langle i\xi\cdot w, w \rangle}{\langle w, w \rangle}.\]

The next observation is a momentum geometry counterpart of 
Lemma~\ref{lem:saturated} and will be applied fibrewise in the bundle situation 
considered in Section~\ref{sect:bundlecase} below. 

\begin{lem}\label{lem:FSpotential}
A potential for the restriction of the Fubini-Study K\"ahler form $\omega = 
\omega_{\mathbb{P}(W)}$ to $V \subset \mathbb{P}(W)$ is given  
by
\[\omega|_V = - d d^c \log (\chi_V  +1 ) = 2i \partial \bar\partial 
\log (\chi_V  +1 ).\]
The function $\rho := \log (\chi_V  +1)  \in \mathcal{C}^\infty (V)$ is 
an exhaustion of $V$, and if $\xi_{\mathbb{P}(W)}$ is the vector field on 
$\mathbb{P}(W)$ induced by the $K$-action, then 
\[ V \ni v \mapsto d^c\rho (\xi_{\mathbb{P}(W)})([v:1])  =   \mu^\xi([v:1]) \]
defines a momentum map for the $K$-action on $V$  with respect to $\omega|_V$. 
\end{lem}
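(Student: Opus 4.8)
The plan is to transport every ingredient from $\mathbb{P}(W)$ to $V$ along the embedding $\theta$, so that the whole statement becomes a short sequence of pullback computations. Two elementary facts make this work. First, because $G$ acts trivially on the second summand of $W = V\oplus\mathbb{C}$, one has $g\acts[v:1] = [g\cdot v : 1]$, so $\theta(V)$ is a $G$-invariant — hence $K$-invariant — locally closed complex submanifold of $\mathbb{P}(W)$ on which $\theta$ intertwines the two $G$-actions; consequently $\xi_{\mathbb{P}(W)}$ is tangent to $\theta(V)$ along $\theta(V)$, and under the biholomorphism $\theta\colon V\to\theta(V)$ it corresponds to the fundamental vector field $\xi_V$ of the $K$-action on $V$ (this is the meaning of the symbol $d^c\rho(\xi_{\mathbb{P}(W)})([v:1])$ in the statement). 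Second, $\wt{\theta}\colon v\mapsto(v,1)\in W\setminus\{0\}$ is a global holomorphic frame of $\theta^*\mathcal{O}_{\mathbb{P}(W)}(-1)$; that is, $p\circ\wt{\theta} = \theta$ for the tautological projection $p\colon W\setminus\{0\}\to\mathbb{P}(W)$.

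For the potential, recall that the Fubini--Study form associated to $\chi_W$ — with the normalisation fixed above, i.e.\ the one for which $\mu^\xi([w]) = 2\langle i\xi\cdot w,w\rangle/\langle w,w\rangle$ — satisfies $p^*\omega_{\mathbb{P}(W)} = -dd^c\log\chi_W$ on $W\setminus\{0\}$. Pulling this back along $\wt{\theta}$ and using $p\circ\wt{\theta} = \theta$ together with $\chi_W\circ\wt{\theta} = \chi_V + 1$ gives
\[\omega|_V = \theta^*\omega_{\mathbb{P}(W)} = -dd^c\log(\chi_V+1) = 2i\partial\bar\partial\log(\chi_V+1),\]
since $\theta$ is holomorphic and therefore $\theta^*$ commutes with $dd^c$ on functions; this is the first assertion. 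As $\chi_V(v) = \langle v,v\rangle$ is the square-norm of a positive definite Hermitian form, it is a nonnegative proper function on $V$, and hence so is $\rho = \log(\chi_V+1)$; thus $\rho$ is an exhaustion.

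It remains to treat the momentum map, which I would do by first identifying the candidate function with $\theta^*\mu^\xi$ and then invoking that $\mu$ is already known to be a momentum map on $\mathbb{P}(W)$. On $W\setminus\{0\}$ the chain rule for $d^c$ on functions gives $d^c(\log\chi_W) = \chi_W^{-1}\,d^c\chi_W$; since $G$ acts trivially on $\mathbb{C}$, one has $\xi_W(v,z_0) = (\xi\cdot v, 0)$, so the formula for the momentum map $\mu_V$ given at the beginning of this section, applied to the representation $W$, yields $d^c\chi_W(\xi_W)(w) = 2\langle i\xi\cdot w, w\rangle$, whence
\[d^c(\log\chi_W)(\xi_W)(w) \;=\; \frac{2\langle i\xi\cdot w, w\rangle}{\langle w,w\rangle} \;=\; \mu^\xi([w]).\]
Restricting this function along $\wt{\theta}$ — which pulls $\log\chi_W$ back to $\rho$, which is holomorphic so it makes $d^c$ commute with the pullback, and which intertwines $\xi_V$ with $\xi_W|_{\theta(V)}$ (again because the $\mathbb{C}$-action is trivial) — gives $d^c\rho(\xi_V)(v) = \mu^\xi([v:1])$, exactly the displayed equality. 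Finally, pulling the Hamiltonian identity $d\mu^\xi = \imath_{\xi_{\mathbb{P}(W)}}\omega_{\mathbb{P}(W)}$ back along $\theta$ and using that $\xi_{\mathbb{P}(W)}$ is $\theta$-related to $\xi_V$ gives $d(\theta^*\mu^\xi) = \imath_{\xi_V}(\omega|_V)$, while $\theta^*\mu = \mu\circ\theta$ is $K$-equivariant as a composition of $K$-equivariant maps; hence $\theta^*\mu$, which we have just identified with $v\mapsto d^c\rho(\xi_{\mathbb{P}(W)})([v:1])$, is a momentum map for the $K$-action on $(V,\omega|_V)$.

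The argument is essentially bookkeeping; the only points I expect to require care are the consistent tracking of the normalisation constants (the convention $2i\partial\bar\partial = -dd^c$ and the factors of $2$ in $\omega_{\mathbb{P}(W)}$ and $\mu^\xi$) and the two routine verifications that $d^c$ commutes with pullback along holomorphic maps on functions and that the fundamental vector fields restrict compatibly to the $K$-invariant submanifold $\theta(V)$. I do not anticipate any genuine obstacle beyond this.
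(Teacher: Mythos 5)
Your proposal is correct and follows essentially the same route as the paper, which simply cites the well-known potential formula, notes the exhaustion property, and leaves the momentum map identity as a ``direct computation'' based on exactly the two facts you isolate: the equality $\xi_W(v,1)=(\xi_V(v),0)$ and the $\theta$-relatedness of $\xi_{\mathbb{P}(W)}$ and $\xi_V$. You have merely written out the pullback bookkeeping that the authors suppress, with the normalisations tracked consistently.
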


\begin{proof}
The first part is well-known, the exhaustion property is clear, and the last 
part follows by direct computation that uses the obvious equality
\[\xi_W(v,1) = (\xi_V(v), 0)\] together with the fact that (being a 
$G$-equivariant isomorphism) $\theta^{-1}$ transforms the fundamental vector 
field $\xi_{\mathbb{P}(W)}$ into $\xi_V$ .
\end{proof}

\begin{rem}\label{rem:potentialmu}
In the following, we will only need that $\rho$ is an exhaustive $K$-invariant 
potential for a K\"ahler form on $V$. The fact that $d^c\rho$ then yields a 
momentum map for the $K$-action can be seen as follows: since $\rho$ and hence 
$d^c\rho$ is $K$-invariant, we have $\mathcal{L}_{\xi_X}(d^c\rho) = 0$, and 
therefore Cartan's formula yields \[d\bigl(\imath_{\xi_X} (d^c\rho) \bigr) = 
-\imath_{\xi_X}\bigl(dd^c(\rho)\bigr) = \imath_{\xi_X}\omega, \]
so that indeed $\imath_{\xi_X} (d^c\rho)$ is the $\xi$-component of a momentum 
map with respect to $\omega$, whose equivariance is easy to check as well.
\end{rem}

\section{K\"ahler forms on holomorphic principal bundles and vector 
bundles}\label{sect:bundlecase}

Given a $G$-principal bundle $\pi\colon X \to Q$ over a compact K\"ahlerian 
manifold $Q = X/G$, we will be looking for special K\"ahler structures on $X$. 
For this, the following construction will be useful.

\begin{rem}\label{rem:vectorbundleembedding}
As $G$ is an affine algebraic $G$-variety (with algebraic action given by 
left-multiplication), it admits a closed (algebraic) embedding $\psi: G 
\hookrightarrow V$ into the vector space associated with a certain 
finite-dimensional $G$-representation $\varphi: G \to GL_\mathbb{C}(V)$, see for 
example \cite[Prop.~2.2.5]{Brion}. I.e., there is a closed $G$-orbit $G \acts v$ 
in $V$ with $G_v =\{e\}$, and in particular, $\varphi$ is injective. We may 
assume that $V$ has a $K$-invariant Hermitian form $h$, so that 
$\varphi|_K\colon K \hookrightarrow SU(h)$. The associated vector bundle 
$\mathcal{V}:= X \times_G V$ has a natural holomorphic $G$-action (preserving 
each fibre), as well as a Hermitian metric that is induced by $h$ and therefore 
invariant by the associated $K$-action. Slightly abusing notation, we will call 
the bundle metric $h$ as well. Writing $X = X \times_G G$, we may produce a 
closed $G$-equivariant holomorphic embedding 
 \[\begin{xymatrix}{
    X=X\times_G G\, \ar@{^(->}^{\Psi:= \id_X\times_G \psi}[rr] \ar[rd]& & 
X\times_G 
    V =\mathcal{V} \ar[ld]\\
      &X/G &
    }
   \end{xymatrix}
\]
of $X$ into $\mathcal{V}$ over $Q = X/G$.
\end{rem}

Let now $(B,\omega)$ be a compact K\"ahler manifold and let $\pi\colon E\to B$ 
be a holomorphic vector bundle. 

\begin{rem}\label{rem:Voisin}
Setting $\mathcal{W} := \mathcal{V} \oplus \mathbb{C}$, it can be deduced 
from~\cite[Th\'eor\`eme Principal II]{Bl} that the total space of 
$\mbb{P}(\mathcal{W})\to B$ is K\"ahler. In fact, there is a rather explicit 
way of constructing K\"ahler forms on projectivisations of vector bundles, 
e.g.~see~\cite[Prop.~3.18]{V1}. As we will see in more detail below, one uses 
the fact that there exists a $(1,1)$-form $\omega_\mathcal{W}$ on 
$\mbb{P}(\mathcal{W})$ whose restriction to every fibre 
$\mbb{P}(\mathcal{W}_x)$ is the Fubini-Study form of this fibre associated with 
a bundle metric on $\mathcal{W}$, namely (up to some positive constant) the 
Chern form of $\mathcal{O}_{\mbb{P}(\mathcal{W})}(1)$. Then one obtains a 
K\"ahler form on $\mbb{P}(\mathcal{W})$ by adding a sufficiently large multiple 
of the pull-back of $\omega$ to $\mbb{P}(\mathcal{W})$. Since the holomorphic 
vector bundle $\mathcal{V}$ embeds as an open subset into 
$\mbb{P}(\mathcal{V}\oplus\mbb{C})$, we see that $\mathcal{V}$ and hence $X 
\subset \mathcal{V}$ both inherit corresponding K\"ahler forms by restriction. 
\end{rem}

We will now analyse the construction of the K\"ahler form on $\mathcal{V} 
\subset \mathbb{P}(\mathcal{V} \oplus \mathbb{C})$ in more detail, with the aim 
of pointing out the specific features relevant to momentum map geometry. 

\begin{lem}\label{lem:VectorBundle}
Let $\pi\colon \mathcal{V}\to B$ be a holomorphic vector bundle of rank $n$ 
over a compact K\"ahler manifold $(B,\omega)$ on which the compact Lie group 
$K$ acts holomorphically via fibre-preserving vector bundle automorphisms. Set 
$\mathfrak{k}:=\Lie (K)$. Let  $h$ be a $K$-invariant Hermitian metric $h$ on 
$\mathcal{V}$, with associated length function 
\[\chi_h: \mathcal{V} \to \mathbb{R}^{\geq 0}, \; v \mapsto h(v,v).\]
Then, for all positive real numbers $c \gg 0$ the $(1,1)$-form 
\[\omega_\mathcal{V} = 2i \partial \bar\partial \log(\chi_h  + 1)  + c \cdot 
\pi^*(\omega) \]
has the following properties:
\begin{enumerate}
\item $\omega_\mathcal{V}$ is K\"ahler, 
\item the $K$-action on $\mathcal{V}$ admits a momentum map $\mu\colon 
\mathcal{V} \to \mathfrak{k}^*$ with respect to $\omega_\mathcal{V}$,
 \item every point $x \in B$ has an open neighborhood $U$ such that on 
$\pi^{-1}(U)$ we may write $\omega = 2i \partial\bar\partial \rho$ for some 
function $\rho \in \mathcal{C}^\infty(\pi^{-1}(U))^{K}$ that has the following 
properties:
 \begin{enumerate} 
 \item $\rho$ is an exhaustion when restricted to  $\mathcal{V}_y$ for all $y 
\in U$, 
 \item for all $\xi \in \mathfrak{k}$ and for all $v \in \pi^{-1}(U)$ we have
 \begin{equation}\label{eq:momentumfromrho}
  \mu^\xi (v) = d^c\rho(\xi_\mathcal{V}(v)) = d\rho(J\xi_\mathcal{V}(v)), 
 \end{equation}
where as usual $J \in \mathrm{End}(T\mathcal{V})$ denotes the (almost) complex structure 
of the complex manifold $\mathcal{V}$, and $\xi_\mathcal{V}$ is the vector field 
on $\mathcal{V}$ associated with $\xi$ via the $K$-action. 
\end{enumerate}
\end{enumerate}
\end{lem}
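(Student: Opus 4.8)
The plan is to combine the fibrewise Fubini--Study picture of Lemma~\ref{lem:FSpotential} with a standard positivity-in-the-base argument, exactly as sketched in Remark~\ref{rem:Voisin}. First I would fix an open cover $\{U_\alpha\}$ of $B$ trivialising $\mathcal{V}$, and over each $U_\alpha$ use a local frame to identify $\pi^{-1}(U_\alpha) \cong U_\alpha \times V$. On such a chart the function $\chi_h$ restricted to a fibre is (up to the positive-definite Gram matrix of $h$ in the chosen frame) the standard length-squared function, so Lemma~\ref{lem:FSpotential} gives that $2i\partial\bar\partial \log(\chi_h+1)$ restricts to each fibre $\mathcal{V}_y$ as the Fubini--Study form of $\mathbb{P}(V\oplus\mathbb{C})$ pulled back along $\theta$, hence is strictly positive in the fibre directions. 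In particular $2i\partial\bar\partial\log(\chi_h+1)$ is a global, $K$-invariant (since $h$ is $K$-invariant and $K$ acts trivially on the $\mathbb{C}$-summand) real $(1,1)$-form on $\mathcal{V}$ that is positive along fibres; equivalently, it is (a positive multiple of) the curvature form of a bundle metric on $\mathcal{O}_{\mathbb{P}(\mathcal{V}\oplus\mathbb{C})}(1)$ restricted to $\mathcal{V}$.

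The next step is the compactness argument for~(1). Positivity of $\omega_\mathcal{V} = 2i\partial\bar\partial\log(\chi_h+1) + c\,\pi^*\omega$ is automatic in fibre directions for every $c \geq 0$; the issue is the mixed and horizontal directions. Here I would invoke the compactness of $B$: choose a finite trivialising cover and a subordinate partition of unity, and on each chart estimate the horizontal and mixed blocks of $2i\partial\bar\partial\log(\chi_h+1)$ against the (fibrewise-positive) form $2i\partial\bar\partial\log(\chi_h+1)$ itself and the pull-back $\pi^*\omega$ of the Kähler form on $B$. A routine linear-algebra lemma (if a Hermitian matrix has block form with positive-definite lower-right block, one can make the whole matrix positive-definite by adding a large positive-definite matrix supported on the upper-left block) then shows that there is a uniform $c_0$ with $\omega_\mathcal{V}$ positive for all $c \geq c_0$; since $\pi^*\omega$ is closed and $2i\partial\bar\partial(\cdots)$ is exact, $\omega_\mathcal{V}$ is a closed positive $(1,1)$-form, hence Kähler. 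The global $d$-closedness is immediate, so the only real content is uniform positivity, which is where compactness of $B$ enters essentially. I expect this to be the main obstacle: making the "$c\gg 0$" uniform requires the compactness and a little care with the partition of unity, as the local potentials $\log(\chi_h+1)$ do not patch additively.

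For~(2), once $\omega_\mathcal{V}$ is Kähler and $K$-invariant, the existence of a momentum map is not automatic in general (there can be a cohomological obstruction), but here it follows from the explicit local potentials: the argument of Remark~\ref{rem:potentialmu} produces, on each chart, the candidate $\xi$-component $\imath_{\xi_\mathcal{V}}(d^c\rho_\alpha)$, and because two local potentials $\rho_\alpha$, $\rho_\beta$ for $\omega_\mathcal{V}$ differ by a pluriharmonic function which is $K$-invariant, the contracted forms $\imath_{\xi_\mathcal{V}}(d^c\rho_\alpha)$ agree on overlaps up to the derivative of a $K$-invariant pluriharmonic function; using that $K$ is compact one averages to see these glue to a global smooth function $\mu^\xi$, and linearity and equivariance in $\xi$ are checked directly. (Alternatively, one observes that $2i\partial\bar\partial\log(\chi_h+1)$ already globally admits a momentum map coming from the ambient projective bundle, since that bundle is Kähler with Hamiltonian $K$-action by Remark~\ref{rem:Voisin} and Section~\ref{Section:Representations}, and $c\,\pi^*\omega$ contributes the zero momentum map as $K$ acts trivially on $B$; the sum of the two momentum maps works.)

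Finally~(3) is essentially a restatement of what the local analysis produces: for $x \in B$ take a trivialising neighbourhood $U$ as above; then $\omega_\mathcal{V}|_{\pi^{-1}(U)} = 2i\partial\bar\partial\rho$ with $\rho := \log(\chi_h+1) + c\,\rho_\omega$, where $\rho_\omega$ is a local potential for $\omega$ on $U$ pulled back to $\pi^{-1}(U)$ (shrinking $U$ so that $\omega = 2i\partial\bar\partial\rho_\omega$ there), and $\rho$ is $K$-invariant since $\chi_h$ is $K$-invariant and $K$ acts trivially along the $U$-directions. Property~(3a), the fibrewise exhaustion, follows because $\rho_\omega$ is fibre-constant and $\log(\chi_h+1)$ is a fibrewise exhaustion by Lemma~\ref{lem:FSpotential}. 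Property~(3b), formula~\eqref{eq:momentumfromrho}, then follows from Remark~\ref{rem:potentialmu} applied to this $\rho$ together with the compatibility of the momentum map from~(2) with the local potentials: the $\pi^*\omega$-term contributes nothing because $\xi_\mathcal{V}$ is tangent to the fibres (the $K$-action is fibre-preserving) and $\rho_\omega$ is constant along fibres, so $d^c\rho_\omega(\xi_\mathcal{V}) = d\rho_\omega(J\xi_\mathcal{V}) = 0$. This last point — that the horizontal potential drops out of the contraction — is the only thing to be checked with any care, and it is a one-line computation.
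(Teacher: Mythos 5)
Your overall route is the same as the paper's: identify $2i\partial\bar\partial\log(\chi_h+1)$ as the restriction to $\mathcal{V}$ of the relative Fubini--Study form on $\mathbb{P}(\mathcal{V}\oplus\mathbb{C})$, add a large multiple of $\pi^*\omega$ for positivity, and read off the momentum map from the local potentials $\log(\chi_h+1)+c\,\pi^*\varphi_k$ via Remark~\ref{rem:potentialmu}. However, the step you yourself flag as the main obstacle --- uniformity of ``$c\gg 0$'' --- is not closed by the argument you propose. Your blockwise linear-algebra lemma gives, at each point of $\mathcal{V}$, \emph{some} $c$ making the form positive, and compactness of $B$ alone does not upgrade this to a uniform $c$, because the fibres $\mathcal{V}_y\cong\mathbb{C}^n$ are non-compact and the fibre block of the relative Fubini--Study form degenerates as $\chi_h\to\infty$ (its smallest eigenvalue decays like $(1+\chi_h)^{-2}$), so the pointwise admissible $c$ is not obviously bounded over $\pi^{-1}(U_\alpha)$. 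The clean fix --- and what the paper does by invoking Voisin's argument --- is to run the compactness argument on the \emph{compact total space} $\mathbb{P}(\mathcal{V}\oplus\mathbb{C})$, where the form is positive on the now-compact fibres, and then restrict to $\mathcal{V}$. You have set up exactly this compactification in your first paragraph, so the gap is one of where compactness is applied, not of missing ingredients; but as written the estimate ``on each chart of a cover of $B$'' does not go through.

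For part~(2), your primary gluing argument is also shakier than needed: two local potentials for $\omega_\mathcal{V}$ differ by a pluriharmonic function $f$, and $\imath_{\xi_\mathcal{V}}(d^c f)$ is then a \emph{locally constant} function on the overlap (since $d\,\imath_{\xi_\mathcal{V}}d^c f=-\imath_{\xi_\mathcal{V}}dd^c f=0$), not something that averaging over $K$ removes; so this version leaves a \v{C}ech-type ambiguity by constants. Your parenthetical alternative, and the computation you carry out in part~(3), is the correct resolution and is precisely the paper's argument: since $\xi_\mathcal{V}$ is vertical and $d^c(\pi^*\varphi_k)=\pi^*(d^c\varphi_k)$ vanishes on vertical vectors, the local candidates $\imath_{\xi_\mathcal{V}}d^c\rho_k$ all literally equal the globally defined function $\imath_{\xi_\mathcal{V}}d^c\log(\chi_h+1)$, so there is nothing to glue. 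I would promote that observation from a parenthesis to the actual proof of~(2).
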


\begin{proof}
We consider $\mathcal{W} = \mathcal{V} \oplus \mathbb{C}$, the direct sum of 
$\mathcal{V}$ with the trivial line bundle, and its projectivisation 
$\mathbb{P}(\mathcal{W}) = (\mathcal{W} \setminus 
z_{\mathcal{W}})/\mathbb{C}^*$, which contains $\mathbb{P}(\mathcal{V})$ as a 
codimension one subbundle with (open) complement isomorphic to $\mathcal{V}$. 
Endow $\mathcal{W}$ with the Hermitian metric \[h_\mathcal{W}(v \oplus z) := 
\chi_h(v) + |z|^2.\] Using the section of 
$\mathcal{O}_{\mathbb{P}(\mathcal{W})}(1)$ corresponding to the divisor 
$\mathbb{P}(\mathcal{V}) \subset \mathbb{P}(\mathcal{W})$ one computes that the 
\emph{relative Fubini-Study form} $\omega_{FS}^{\mathcal{W}} $, i.e., the 
curvature form of the natural Hermitian metric in 
$\mathcal{O}_{\mathbb{P}(\mathcal{W})}(1)$ induced by $h$, when restricted to 
$\mathcal{V} = \mathbb{P}(\mathcal{W}) \setminus \mathbb{P}(\mathcal{V})$  is 
given by 
\[c\cdot \omega_{FS}^{\mathcal{W}}|_{\mathcal{V}} = 2i\partial \bar \partial 
\log(\chi_h +1) = - d d^c \log(\chi_h + 1)\quad \quad \text{for some 
}c\in \mathbb{R}^{>0};\]
see for example \cite[Chap.~V, \S 15.C; p.~282]{Dem}, but notice the different 
convention regarding projectivisations of vector bundles. Restricting to any 
fibre $\mathcal{V}_y$, $y \in B$, we recover the K\"ahler form associated with 
the Hermitian scalar product $h_y$ discussed in Lemma~\ref{lem:FSpotential}. 
Using compactness of $B$, Part (1) is now proven with the argument of 
\cite[p.~78]{V1}. 

The statements made in Parts (2) and (3) will be proven simultaneously. Let 
$\{U_k\}_{k \in I}$ be a finite open covering of $B$ so that $\omega|_{U_k}$ is 
given by a potential $\varphi_k$; i.e., $\omega|_{U_k} = - d d^c \varphi_k$.  It is clear that 
$\omega_\mathcal{V}|_{\pi^{-1}(U_k)}$ has $K$-invariant potential 
\begin{equation}\label{eq:localpotential}\rho_k := \log(\chi_h + 
1)|_{\pi^{-1}(U_k)} + c\cdot\pi^*(\varphi_k).
\end{equation} 
On any of the open sets $\pi^{-1}(U_k)$ a corresponding momentum map for the $K$-action 
is given by $\mu^\xi_k:=\iota_{\xi_\mathcal{V}}d^c\rho_k$, 
cf.~Remark~\ref{rem:potentialmu}. Setting $\eta:= \log(\chi_h + 1)$ in order to 
shorten notation, still on $\pi^{-1}(U_k)$ we compute
\begin{align}
\iota_{\xi_\mathcal{V}}d^c\bigl(\eta+c\cdot\pi^*\varphi_k\bigr)&=\iota_{
\xi_\mathcal{V}}d^c\eta + c\cdot 
\iota_{\xi_\mathcal{V}}d^c\pi^*\varphi_k  \nonumber\\
&=\iota_{\xi_\mathcal{V}}d^c\eta + 
c\cdot\iota_{\xi_\mathcal{V}}\pi^*d^c\varphi_k \label{eq:computation}\\&    
=\iota_{\xi_\mathcal{V}}d^c\eta \nonumber
\end{align}
Here, we used the fact that $\xi_\mathcal{V}$ is tangential to the fibres of 
$\pi$, whereas the forms $\pi^*d^c\varphi_k$ vanish in fibre direction. 
Consequently, the maps $\mu_k$ on $\pi^{-1}(U_k)$ glue together to a 
well-defined momentum map $\mu_\mathcal{V}\colon \mathcal{V}\to\lie{k}^*$ with 
respect to $\omega_\mathcal{V}$; as the computation \eqref{eq:computation} 
shows, it is given by the map formally associated with the (in general only 
fibrewise strictly plurisubharmonic) function $\eta = \log (\chi_h +1 )$ defined 
on the whole of $\mathcal{V}$. This shows the statements made in Part (2). The 
properties listed in Part (3) then follow from the definition of $\rho_k$, see 
Equation~\eqref{eq:localpotential}, and the fibrewise properties established in 
Lemma~\ref{lem:FSpotential}.
\end{proof}

The following example explains the choice of the potential $\log(\chi_h+1)$ and 
the remark at the end of the first paragraph of 
Section~\ref{Section:Representations}.

\begin{ex}\label{Ex:WrongBundleMetric}
The total space of the line bundle $\mathcal{O}(1)$ over $\mbb{P}_1$ can be 
explicitly realised as $\mbb{P}_2\setminus\bigl\{[0:0:1]\bigr\}$ with bundle 
projection $\pi\bigl([z_0:z_1:z_2]\bigr)=[z_0:z_1]$. Let us consider the bundle 
metric on $\mathcal{O}(1)$ with associated length function
\begin{equation*}
\chi_h\bigl([z_0:z_1:z_2]\bigr)=\frac{\abs{z_2}^2}{\abs{z_0}^2+ 
\abs{z_1}^2}.
\end{equation*}
An explicit computation shows that there does not exist any $c>0$ such that 
$i\partial\ol{\partial}\chi_h+c\cdot\pi^*\omega_{FS}$ is positive. Namely, in 
the affine coordinates on 
$\mbb{P}_2\setminus\bigl\{[0:0:1]\bigr\}\cap\{z_0\not=0\}\cong 
\mbb{C}^2\setminus\{0\}$ the Hermitian form of 
$i\partial\ol{\partial}\chi_h+c\cdot\pi^*\omega_{FS}$ is given by
\begin{equation*}
\frac{1}{\bigl(\abs{z}^2+\abs{w}^2\bigr)^3}
\begin{pmatrix}
\abs{z}^2\bigl(1+c\abs{w}^2\bigr)-\abs{w}^2+c\abs{w}^4 & 
\bigl(2-c\bigl(\abs{z}^2+\abs{w}^2\bigr)\bigr)z\ol{w}\\
\bigl(2-c\bigl(\abs{z}^2+\abs{w}^2\bigr)\bigr)\ol{z}w &
\bigl(1+c\abs{z}^2\bigr)\abs{w}^2-\abs{z}^2+c\abs{z}^4
\end{pmatrix},
\end{equation*}
which is never positive definite.
\end{ex}

\section{Proof of Theorem~\ref{Thm:mainthm}}

Before we start the proof proper, we observe that in case that $G$ is not 
connected with identity component $G^\circ$, the quotient $Q^\circ := X / 
G^\circ$ is a finite topological covering of $X/G$ and therefore also compact. 
Moreover, for $l \in \{a,b,c\}$ statement ($l$) of Theorem~\ref{Thm:mainthm} 
holds if and only if the corresponding statement ($l^\circ$) holds for the 
action of $G^\circ$, the maximal compact group $K^\circ := K \cap G^\circ$ of 
$G^\circ$, the equivariant compactification $\overline{G^\circ} \subset 
\overline{G}$, and the quotient $Q^\circ$. For this observation, the main point 
to notice is that $\overline{G} = G \times_{G^\circ} \overline{G^\circ}$, and 
hence
\[\overline{X} = X \times_G \overline{G} = X \times_G G \times_{G^\circ} 
\overline{G^\circ} \cong X \times_{G^\circ} \overline{G^\circ}.\]
We therefore assume for the remainder of the proof that $G=G^\circ$ is 
connected.

\subsection{Implication "(b) $\Rightarrow$ (a)"}

We quickly summarise the fundamental results in the K\"ahlerian version of the 
theory of Symplectic Reduction: Let $G=K^\mbb{C}$ be a complex-reductive group 
and let $X$ be a holomorphic $G$-manifold. Let $\omega$ be a $K$-invariant 
K\"ahler form on $X$ such that the $K$-action on $X$ is Hamiltonian with 
equivariant momentum map $\mu\colon X\to\lie{k}^*$. If the zero fibre 
$\mu^{-1}(0)$ is non-empty, then the set 
\begin{equation*}
X^{ss}(\mu):=\bigl\{x\in X \mid \ol{G\acts x}\cap\mu^{-1}(0)\not=\emptyset\bigr\}
\end{equation*}
of semistable points is a non-empty open $G$-invariant of $X$ admitting an 
\emph{analytic Hilbert quotient} $q\colon X^{ss}(\mu)\to X^{ss}(\mu)\hq G$; 
i.e., $q$ is a surjective $G$-invariant, (locally) Stein map to a complex space 
$X^{ss}(\mu)\hq G$ fulfilling \[q_*(\mathcal{O}_{X^{ss}(\mu)})^G = 
\mathcal{O}_{X^{ss}(\mu)\hq G}.\] In particular, $q$ is universal with respect 
to $G$-invariant holomorpic maps from $X^{ss}(\mu)$ to complex spaces. Moreover, 
every fibre of $q$ contains a unique $G$-orbit that is closed in $X^{ss}(\mu)$; 
this orbit is the only one in $\pi^{-1}(0)$ that intersects $\mu^{-1}(0)$, and 
the intersection is a uniquely determined $K$-orbit. With more work, one can 
show that $X^{ss}(\mu)\hq G$ is homeomorphic to the symplectic reduction 
$\mu^{-1}(0)/K$ and via this homeomorphism inherits a K\"ahler structure induced 
by $\omega$, see \cite{HL, HHL, Sjamaar}, and the detailed 
survey~\cite{HHSurvey}.

Suppose from now on that in addition to the assumptions made above the 
$G$-action on $X$ is free and proper.  In this situation, since all $G$-orbits 
are closed in $X$, we have \[X^{ss}(\mu)=G\cdot\mu^{-1}(0),\] and hence 
$X^{ss}(\mu)\hq G=X^{ss}(\mu)/G =\pi(X^{ss}(\mu))$ is an open subset of $X/G$ 
that is homeomorphic to $\mu^{-1}(0)/K$ and possesses a K\"ahler form, whose 
construction is much easier in this simple case; e.g., see \cite[Thm.~3.1]{HKLR} 
and cf.~\cite[Thm.~3.5]{GS} or \cite[Prop.~2.4.6]{HHSurvey}. Under the 
assumption made in (b), the non-empty subset $\pi(X^{ss}(\mu)) \simeq 
\mu^{-1}(0)/K$ is not only open, but also compact. It therefore coincides with 
$X/G=Q$, which  is hence K\"ahler. 

\subsection{Implication "(a) $\Rightarrow$ (b)"}

Choose a $G$-equivariant closed holomorphic embedding $\Psi\colon X 
\hookrightarrow \mathcal{V}$ into the total space of a holomorphic $G$-vector 
bundle over $Q$, as constructed in Remark~\ref{rem:vectorbundleembedding}, and 
let $h$ be a corresponding $K$-invariant Hermitian metric on $\mathcal{V}$. 
Next, we apply Lemma~\ref{lem:VectorBundle} to obtain a constant $c \gg 0$ such 
that \[\omega = 2i \partial \bar\partial \log(\chi_h + 1) + c \cdot 
\pi^*(\omega)\] is a K\"ahler form with the properties (2) and (3) stated in the 
Lemma. We will show that the restriction of $\omega$ to the closed submanifold 
$X \subset \mathcal{V}$, which we continue to denote by $\omega$, has the 
properties claimed in Theorem~\ref{Thm:mainthm}(b). 

First, we notice that the $K$-action on $X$ is Hamiltonian with respect to 
$\omega$, with momentum map being given by the restriction of the momentum map 
from $\mathcal{V}$ to $X$, which we continue to denote by $\mu$. Now, let $q\in 
Q$ be any point, let $x \in X$ a point in the fibre over $q$, and let $U$ and 
$\rho$ be as in part (3) of the Lemma. Then, by (3.a) the restriction of $\rho$ 
to $X_q = G\acts x \subset \mathcal{V}_q$ continues to be a strictly plurisubharmonic 
exhaustion function. Therefore, $\rho|_{X_q}$ has a minimum and in particular a 
critical value, say at $x_0 \in G\acts x$. As $J\xi_\mathcal{V}(x_0) = J\xi_{X_q}(x_0) = 
J\xi_{G\acts x_0} (x_0)$ is obviously tangent to the (complex) orbit $G\acts x_0 
= G\acts x$, Formula \eqref{eq:momentumfromrho} together with the fact that $x_0$ is critical implies that 
\[\mu^\xi(x_0) = d\rho(J \xi_{\mathcal{V}}(x_0)) = d\rho(J \xi_{X_q}(x_0)) = 0 \quad 
\quad \text{for all }\xi \in \mathfrak{k},\]
so that $x_0 \in \mu^{-1}(0)$. In other words, every fibre of $\pi\colon X \to Q$ intersects $\mu^{-1}(0)$, so 
that  \[\pi|_{\mu^{-1}(0)}\colon \mu^{-1}(0) \to Q\] is a $K$-principal bundle 
over the compact manifold $Q$. In particular, $\mu^{-1}(0) \subset X$ is 
compact. 

\subsection{Implication "(a) $\Rightarrow$ (c)"}

As a normal projective $G$-variety for the connected group $G$, the completion 
$\overline{G}$ admits a $G$-equivariant embedding into the projectivisation 
$\mathbb{P}(W)$ of a finite-dimensional complex $G$-representation $W$; see 
\cite[Prop.~5.2.1, Prop.~3.2.6]{Brion}. Using the associated bundle 
construction, this leads to a closed holomorphic embedding
\[\begin{xymatrix}{
\overline{X} = X \times_G \overline{G}\, \ar[rd]\ar@{^(->}[r] & X \times_G 
\mathbb{P}(W) \ar[d] \ar@{=}[r]& \mathbb{P}(\mathcal{W}) \ar[ld]\\
 & Q & 
}
  \end{xymatrix}
   \]
into the projectivisation of the holomorphic vector bundle $\mathcal{W} = X 
\times_G W$ over $Q$. By assumption, $Q$ is compact K\"ahler, so 
Remark~\ref{rem:Voisin} yields the claim.  

\subsection{Implication "(c) $\Rightarrow$ (a)"}

Since $\overline{\pi}\colon \overline{X} \to Q$ is a fibre bundle with compact
K\"ahlerian to\-tal space (and connected fibres), the claim follows from 
\cite[Prop.~II.2]{Bl}.\newline

This concludes the proof of Theorem~\ref{Thm:mainthm}. \hfill\qed

\section{Examples}\label{sect:examples}

The following example shows that even the "K\"ahler" statement of the 
implication $(a)\Longrightarrow(b)$ of Theorem~\ref{Thm:mainthm} does not hold 
for \emph{non-compact} $Q$.

\begin{ex}\label{Ex:NonKählerPBwithNonCompactKählerBase}
The homogeneous fibration
\begin{equation*}
{\rm{SL}}(2,\mbb{C})/
\left(\begin{smallmatrix}1&\mbb{Z}\\0&1\end{smallmatrix}\right)\to
{\rm{SL}}(2,\mbb{C})/
\left(\begin{smallmatrix}1&\mbb{C}\\0&1\end{smallmatrix}\right)
\end{equation*}
is a $\mbb{C}^*$-principal bundle over the non-compact K\"ahler base 
$\mbb{C}^2\setminus\{0\}$. Owing to~\cite[Theorem~3.1]{BO} however, the total 
space ${\rm{SL}}(2,\mbb{C})/\left(\begin{smallmatrix}1&\mbb{Z}\\0&1 
\end{smallmatrix}\right)$ is \emph{not} K\"ahler.
\end{ex}

A slight modification of Example~\ref{Ex:NonKählerPBwithNonCompactKählerBase} 
yields an example showing that an analogue of Lemma~\ref{lem:VectorBundle} does 
not hold if we replace the vector bundle $\pi\colon E\to X$ by an arbitrary  
holomorphic fibre bundle having Stein fibres, even though these kind of fibres 
also admit strictly plurisubharmonic exhaution functions similar to (the logarithm 
of) the norm square function used in the proof of Lemma~\ref{lem:VectorBundle}.

\begin{ex}\label{Ex:SteinBundle}
Let $B$ be the Borel subgroup of $G={\rm{SL}}(2,\mbb{C})$ consisting of upper 
triangular matrices and let $\Gamma=\left(\begin{smallmatrix}1&\mbb{Z}\\0&1 
\end{smallmatrix}\right)$. One verifies directly that $B/\Gamma$ is 
biholomorphic to $(\mbb{C}^*)^2$ and therefore a Stein manifold. Consequently, 
the homogeneous fibration $G/\Gamma\to G/B$ is a holomorphic fibre bundle over 
the compact K\"ahler manifold $\mbb{P}_1$ with typical fibre $(\mbb{C}^*)^2$ 
such that the total space $G/\Gamma$ is \emph{not} K\"ahler, again 
by \cite[Theorem~3.1]{BO}. Note that this bundle is \emph{not} principal, as 
$\Gamma$ is not normal in $B$.
\end{ex}

Next, we give an example (originally due to Lescure) which explains that even 
for $K^\mathbb{C}$-actions on \emph{K\"ahlerian} manifolds $X$ further 
conditions are needed to ensure that the quotient $X/G$ is K\"ahler.

\begin{ex}\label{Ex:Lescure}
Let us consider the Hopf surface $Y=(\mbb{C}^2\setminus\{0\})/\mbb{Z}$ with 
respect to the $\mbb{Z}$-action given by $m\acts v=2^mv$. Let $p\colon 
\mbb{C}^2\setminus\{0\}\to Y$ be the universal covering. The group 
$G={\rm{GL}}(2, \mbb{C})$ acts transitively on $Y$, and a direct calculation 
shows that the isotropy group of $y_0:=p(1,0)\in Y$ is
\begin{equation*}
G_{y_0}=\left\{
\begin{pmatrix}
2^m&b\\0&a
\end{pmatrix};\ m\in\mbb{Z}, b\in\mbb{C}, a\in\mbb{C}^*\right\}.
\end{equation*}
Let us consider the closed subgroups
\begin{equation*}
H:=\left\{
\begin{pmatrix}
2^m&b\\0&1
\end{pmatrix};\ m\in\mbb{Z}, b\in\mbb{C}\right\}\text{ and }
T:=\left\{
\begin{pmatrix}
1&0\\0&a
\end{pmatrix};\ a\in\mbb{C}^*\right\}
\end{equation*}
of $G_{y_0}$. One verifies directly that $T\cong\mbb{C}^*$ normalises $H$ and thus that 
$G_{y_0}=TH\cong T\ltimes H$. Hence, we have the holomorphic 
$\mbb{C}^*$-principal bundle $X:=G/H\to Y=G/(HT)$.

We claim that $X$ is a K\"ahler manifold. For this, let 
$S:={\rm{SL}}(2,\mbb{C})$ and note that
\begin{enumerate}[(1)]
\item $S\cap H=\left(\begin{smallmatrix}1&\mbb{C}\\0&1\end{smallmatrix}\right)$ 
is an algebraic subgroup of $S$, as well as that
\item $SH=\bigl\{2^{m/2}g;\ m\in\mbb{Z}, g\in S\bigr\}$ is a closed subgroup of 
$G$. (In fact, $G/(SH)$ is an elliptic curve.)
\end{enumerate}
Now we can apply~\cite[Theorem~5.1]{GMO} to deduce that $X$ is K\"ahler. 

However, there is no momentum map for the $S^1$-action on $X=G/H$ with respect 
to any $S^1$-invariant K\"ahler form: Indeed, since the subgroup $H$ has 
infinitely many connected components and is therefore not algebraic, the action 
of ${\rm{U}}(2)$ on $X$ is not Hamiltonian, see~\cite[Theorem~4.11]{GMO}. Since 
${\rm{U}}(2)={\rm{SU}}(2)S^1$, this implies that $X$ is not a Hamiltonian 
$S^1$-manifold, either.
\end{ex}

In Example~\ref{Ex:Lescure} above, while $X$ is K\"ahler, the action of a 
maximal compact group $K$ is not Hamiltonian with respect to any K\"ahler form. 
Therefore, one might wonder whether the existence of a momentum map for the 
$K$-action is actually sufficient to conclude that the base of a K\"ahlerian 
principal bundle is K\"ahler. Candidates of counterexamples to this statement 
can be obtained by the Cox construction, which realises toric varieties 
associated with simplicial fans and without torus factors as geometric quotients 
of certain domains in $\mbb{C}^N$ by a linear action of a complex torus $T$, 
see~\cite[Theorem~5.1.11]{CLS}. The following concrete example, which is in some sense minimal, consists of
a smooth complete non-projective toric threefold for which Cox' geometric 
quotient is a $T$-principal bundle.

\begin{ex}\label{ex:toric}
Let $\Sigma$ be the simplicial fan from~\cite[Example~2]{FP}. It is shown that 
the associated toric threefold $X_\Sigma$ is smooth, complete and 
non-projective, hence non-K\"ahler by Moishezon's Theorem, \cite[Chap.~VII, 
Thm.~6.23]{SCVVII}. Since $\Abs{\Sigma(1)}=8$ and $X_\Sigma$ is smooth, we have
\begin{equation*}
\Cl(X_\Sigma)=\Pic(X_\Sigma)\cong\mbb{Z}^{\abs{\Sigma(1)}-\dim X(\Sigma)} 
=\mbb{Z}^5,
\end{equation*}
see~\cite[Theorem~4.1.3 and Proposition~4.2.6]{CLS}. Moreover, since $X_\Sigma$ 
has no torus factors, the relevant group is $T=\Hom_\mbb{Z}(\mbb{Z}^5,\mbb{C}^*) 
\cong(\mbb{C}^*)^5$, see~\cite[Lemma~5.1.1]{CLS}. Consequently, an application 
of~\cite[Theorem~5.1.11]{CLS} implies that there exists a geometric $T$-quotient 
$\pi\colon\mbb{C}^8\setminus Z(\Sigma)\to X_\Sigma$ where $T$ acts linearly on 
$\mbb{C}^8$. In particular, there is a momentum map for the action of any 
compact real form of $T$ on $\mbb{C}^8\setminus Z(\Sigma)$, described explicity 
in Section~\ref{Section:Representations} above.

Using again the fact that $X_\Sigma$ is smooth, we may 
apply~\cite[Proposition~2.1.4.6]{ADHL} to see that $T$ acts \emph{freely} on 
$\mbb{C}^8\setminus Z(\Sigma)$. Since the orbit space $\bigl(\mbb{C}^8\setminus 
Z(\Sigma)\bigr)/T\cong X_\Sigma$ is Hausdorff and since the holomorphic slice 
theorem for Hamiltonian actions, e.g.~see \cite[Thm.~4.1.]{HHSurvey}, yields 
local slices for the $T$-action, it follows that $T$ acts \emph{properly} on 
$\mbb{C}^8\setminus Z(\Sigma)$, see~\cite[Theorem~1.2.9]{Palais} ; i.e., $\pi$ 
is indeed a $T$-principal bundle. 
\end{ex}

\begin{rem}
As in Section~\ref{Section:Representations} above, Example~\ref{ex:toric} can 
be equivariantly compactified $\mbb{C}^8 \hookrightarrow \mathbb{P}^8 = 
\mathbb{P}(\mbb{C}^8 \oplus \mathbb{C})$ to a Hamiltonian action of $K = 
(S^1)^5$ on $\mathbb{P}^8$, say with momentum map $\mu$.  Now, given any point 
$p$ in the open subset $\mbb{C}^8\setminus Z(\Sigma)$, we may actually 
shift the momentum map by a constant $\xi \in \mathfrak{k}^*$ to define a new 
momentum map $\hat \mu$ with $p \in \hat\mu^{-1}(0)$. The corresponding set of 
 semistable points is Zariski-open in $\mathbb{P}^8$, hence its intersection 
with $\mbb{C}^8\setminus Z(\Sigma)$ yields a Zariski-open subset $U \subset 
(\mbb{C}^8\setminus Z(\Sigma)) /T \cong X_\Sigma$  admitting a K\"ahler form 
$\omega_U$. As all quotient maps are in fact meromorphic, $X_\Sigma$ is 
bimeromorphic to the compact K\"ahler space $\hat{\mu}^{-1}(0)/K$ on which 
$\omega_U$ extends to an honest K\"ahler structure by K\"ahlerian reduction; 
cf.~\cite{Fujiki}. I.e., the K\"ahlerian reduction theory  explains in a precise 
way how $X_\Sigma$ is in Fujiki's class $\mathcal{C}$ (and not K\"ahler).
\end{rem}

\begin{rem}
With respect to item (c) of Theorem~\ref{Thm:mainthm}, Example~\ref{ex:toric} 
shows that (for connected $G$) it is not enough to assume 
 \begin{enumerate}
  \item some compactification $\widehat{X}$ of $X$ to be K\"ahler, and 
  \item the action map $G \times X \to X$ to extend  to a meromorphic map 
$\overline{G} \times \widehat{X} \dasharrow \widehat{X}$
 \end{enumerate}
in order for $X/G$ to be K\"ahler. While under these two assumptions there will 
always be a momentum map $\hat{\mu}\colon \widehat{X} \to \mathfrak{k}^*$ for 
the $K$-action, see e.g.~the discussion of classical results regarding this 
connection in \cite[Rem.~2.2]{GM18}, as in the example the intersection of the 
compact subset $\widehat{\mu}^{-1}(0)$ with the open subset $X \subset 
\widehat{X}$ might always be non-compact (or empty). 
\end{rem}

\vfill


\begin{thebibliography}{HKLR87}

\bibitem[ADHL15]{ADHL}
Ivan Arzhantsev, Ulrich Derenthal, J\"{u}rgen Hausen, and Antonio Laface,
  \emph{Cox rings}, Cambridge Studies in Advanced Mathematics, vol. 144,
  Cambridge University Press, Cambridge, 2015.

\bibitem[Bla56]{Bl}
Andr{\'e} Blanchard, \emph{Sur les vari\'et\'es analytiques complexes}, Ann.
  Sci. Ecole Norm. Sup. (3) \textbf{73} (1956), 157--202.

\bibitem[BO88]{BO}
Fran{\c c}ois Berteloot and Karl Oeljeklaus, \emph{Invariant plurisubharmonic
  functions and hypersurfaces on semisimple complex {L}ie groups}, Math. Ann.
  \textbf{281} (1988), no.~3, 513--530.

\bibitem[Bri18]{Brion}
Michel Brion, \emph{Linearization of algebraic group actions}, Handbook of
  group actions. {V}ol. {IV}, Adv. Lect. Math. (ALM), vol.~41, Int. Press,
  Somerville, MA, 2018, pp.~291--340.

\bibitem[CLS11]{CLS}
David~A. Cox, John~B. Little, and Henry~K. Schenck, \emph{Toric varieties},
  Graduate Studies in Mathematics, vol. 124, American Mathematical Society,
  Providence, RI, 2011.

\bibitem[Dem12]{Dem}
Jean-Pierre Demailly, \emph{Complex analytic and algebraic geometry}, available
  at http://www-fourier.ujf-grenoble.fr/$\sim$demailly/books.html, 2012.

\bibitem[FP05]{FP}
Osamu Fujino and Sam Payne, \emph{Smooth complete toric threefolds with no
  nontrivial nef line bundles}, Proc. Japan Acad. Ser. A Math. Sci. \textbf{81}
  (2005), no.~10, 174--179 (2006).

\bibitem[Fuj96]{Fujiki}
Akira Fujiki, \emph{K\"{a}hler quotient and equivariant cohomology}, Moduli of
  vector bundles ({S}anda, 1994; {K}yoto, 1994), Lecture Notes in Pure and
  Appl. Math., vol. 179, Dekker, New York, 1996, pp.~39--53.

\bibitem[GM18]{GM18}
Daniel Greb and Christian Miebach, \emph{Hamiltonian actions of unipotent
  groups on compact {K}\"{a}hler manifolds}, \'{E}pijournal G\'{e}om.
  Alg\'{e}brique \textbf{2} (2018), Art. 10, 30pp.

\bibitem[GMO11]{GMO}
Bruce Gilligan, Christian Miebach, and Karl Oeljeklaus, \emph{Homogeneous
  {K}{\"a}hler and {H}amiltonian manifolds}, Math. Ann. \textbf{349} (2011),
  no.~4, 889--901.

\bibitem[GPR94]{SCVVII}
Hans Grauert, Thomas Peternell, and Reinhold Remmert (eds.), \emph{Several
  complex variables {VII}, {S}heaf-theoretical methods in complex analysis},
  Encyclopaedia of Mathematical Sciences, vol.~74, Springer-Verlag, Berlin,
  1994. \MR{1326617 (96k:32001)}

\bibitem[GS82]{GS}
Victor Guillemin and Shlomo Sternberg, \emph{Geometric quantization and
  multiplicities of group representations}, Invent. Math. \textbf{67} (1982),
  no.~3, 515--538.

\bibitem[Hau04]{Hausen}
J\"{u}rgen Hausen, \emph{Geometric invariant theory based on {W}eil divisors},
  Compos. Math. \textbf{140} (2004), no.~6, 1518--1536.

\bibitem[HH99]{HHSurvey}
Peter Heinzner and Alan Huckleberry, \emph{Analytic {H}ilbert quotients},
  Several complex variables ({B}erkeley, {CA}, 1995--1996), Math. Sci. Res.
  Inst. Publ., vol.~37, Cambridge Univ. Press, Cambridge, 1999, pp.~309--349.

\bibitem[HHL94]{HHL}
Peter Heinzner, Alan Huckleberry, and Frank Loose, \emph{K\"{a}hlerian
  extensions of the symplectic reduction}, J. Reine Angew. Math. \textbf{455}
  (1994), 123--140.

\bibitem[HKLR87]{HKLR}
Nigel Hitchin, Anders Karlhede, Ulf Lindstr\"{o}m, and Martin Ro\v{c}ek,
  \emph{Hyper-{K}\"{a}hler metrics and supersymmetry}, Comm. Math. Phys.
  \textbf{108} (1987), no.~4, 535--589.

\bibitem[HL94]{HL}
Peter Heinzner and Frank Loose, \emph{Reduction of complex {H}amiltonian
  {$G$}-spaces}, Geom. Funct. Anal. \textbf{4} (1994), no.~3, 288--297.

\bibitem[MFK94]{MumfordGIT}
David Mumford, John Fogarty, and Frances~Clare Kirwan, \emph{Geometric
  invariant theory}, third ed., Ergebnisse der Mathe\-ma\-tik und ihrer
  Grenzgebiete, 2.\ Folge, vol.~34, Springer-Verlag, Berlin, 1994.

\bibitem[Pal61]{Palais}
Richard Palais, \emph{On the existence of slices for actions of non-compact
  {L}ie groups}, Ann. of Math. (2) \textbf{73} (1961), 295--323.

\bibitem[Sja95]{Sjamaar}
Reyer Sjamaar, \emph{Holomorphic slices, symplectic reduction and
  multiplicities of representations}, Ann. of Math. (2) \textbf{141} (1995),
  no.~1, 87--129.

\bibitem[Voi07]{V1}
Claire Voisin, \emph{Hodge theory and complex algebraic geometry. {I}}, english
  ed., Cambridge Studies in Advanced Mathematics, vol.~76, Cambridge University
  Press, Cambridge, 2007.

\end{thebibliography}
\end{document}